\newcommand{\const}{{\rm const}}
\renewcommand{\leq}{\leqslant}
\renewcommand{\geq}{\geqslant}
\def\const{\mathrm{const}}
\def\RR{\mathbb R}
\def\NN{\mathbb N}
\DeclareMathOperator{\clos}{clos}
\DeclareMathOperator{\Int}{int}
\DeclareMathOperator{\Har}{har}
\DeclareMathOperator{\Hol}{Hol}
\DeclareMathOperator{\Zero}{Zero}
\DeclareMathOperator{\sbh}{sbh}
\DeclareMathOperator{\supp}{supp}
\DeclareMathOperator{\comp}{c}
\begin{document}

\title{Uniqueness Theorems for Subharmonic and Holomorphic Functions of Several Variables on a Domain%%Sample \LaTeX\ Style Guide for Azerbaijan Journal of Mathematics
}

% Please mark \corrauth after the name of the corresponding author.
\author[B.\,N. Khabibullin, N.\,R. Tamindarova]{ B.\,N. Khabibullin, N.\,R. Tamindarova\footnote{Our research is supported by RFBR grant (project no.~16-01-00024).}}

\begin{abstract}
We establish a general uniqueness theorem for  subharmonic  functions of several varia\-b\-l\-es on a domain. A corollary from this  
uniqueness theorem for holomorphic functions is formulated in terms of the zero subset of holomorphic functions and re\-s\-t\-r\-ictions on the growth of functions near the boundary of domain.
\end{abstract}

\keywords{holomorphic function, zero set, uniqueness set, subharmon\-ic functi\-on, Rie\-sz measure, Jensen measure, potential, balayage\\
}

\ams{32A10, 	31B05}

\maketitle

\section{Introduction}
\subsection{Definitions and notations}

We use an information and definitions from \cite{HK}--\cite{H}.
As usual, $\mathbb N:=\{1,2, \dots\}$, $\mathbb R$ and $\mathbb C$ are the sets of all natural, real and complex numbers, resp. 
We set  
\begin{equation}\label{df:R}
\RR^+:= \{x\in \RR\colon x\geq 0\},\; \RR_{-\infty}:=\{-\infty\}\cup \RR,\; 	\RR_{+\infty}:=\RR\cup \{+\infty\}, 
\; \RR_{\pm\infty}:=\RR_{-\infty}\cup \RR_{+\infty},
\end{equation} 
where the usual  order relation $\leq$ on $\RR$  is complemented by the inequalities
$-\infty \leq x\leq +\infty$ for all $x\in \RR_{\pm\infty}$. 
Let $f\colon X\to Y$ be a function.  Given  $S\subset X$, we denote by $f\bigm|_S$ the restriction of $f$ to $S$. 
For $Y\subset \RR_{\pm\infty}$, $g\colon X\to \RR_{\pm\infty}$ and $S\subset X$, we write 
``$f \leq g$ {\it on\/} $S$\,'' if $f(x)\leq g(x)$ for all $x\in S$.

Let $m\in \NN$. Denote by $\mathbb R^m$ the {\it  $m$-dimensional Euclidian real  space.\/} Then $\mathbb R^m_{\infty}
:=\mathbb R^m \cup \{\infty\}$ is the {\it  Alexandroff\/} ($\Leftrightarrow$one-point) {\it compactification of\/} $\mathbb R^m$.
Given a subset $S$ of $\mathbb R^m$ (or $\mathbb R^m_{\infty}$), the closure $\clos S$, the interior $\Int S$  and the boundary $\partial S$ will always be taken relative $\mathbb R^m_{\infty}$. Let $S_0\subset S\subset \mathbb R^m_{\infty}$. If  the  closure $\clos S_0$ 
 is a compact subset of $S$ in the topology induced on $S$  from $\mathbb R^m_{\infty}$, then the set $S_0$ 
is a {\it relatively compact subset} of $S$, and we write $S_0\Subset S$.

Let $n\in \NN$. Denote by $\mathbb C^n $ the {\it  $n$-dimensional Euclidian complex  space.\/} Then $\mathbb C^n _{\infty}:=\mathbb C^n \cup \{\infty\}$ is the {\it  Alexandroff\/} ($\Leftrightarrow$one-point) {\it compactification of\/} $\mathbb C^n$.  If it is necessary,  
we identify $\mathbb C^n $ (or $\mathbb C^n _{\infty}$) with $\mathbb R^{2n}$ (or $\mathbb R^{2n}_{\infty}$). Given a subset $S$ of $\mathbb C^n$ (or $\mathbb C^n_{\infty}$), its closure $\clos S$ and its boundary $\partial S$ will always be taken relative to $\mathbb C^n _{\infty}$. 

Let $A, B$ are sets, and $A\subset B$. The set $A$   is a {\it non-trivial subset\/} of the set $B$ if the subset $A\subset B$ is non-empty ($A\neq \varnothing$) and   {\it proper\/} ($A\neq B$).
	
	We understand always the {\it ``positivity''\/} or {\it ``positive''\/} as $\geq 0$, where  the symbol $0$ denotes the number zero, the zero function, the zero measure, etc. So, a function 	$	f\colon X\to R\overset{\eqref{df:R}}{\subset} \RR_{\pm\infty}$ 
	is positive on $X$ if 	$f(x)\geq 0$ for all $x\in X$. In such case we write ``$f\geq 0$ {\it on\/} $X$''.

The class of all Borel real measures on local compact space $X$ is denoted by $\mathcal M(X)$,  
$\mathcal M_{\comp}(X)$ is the subclass of all Borel measures $\mu$ on $X$ with compact support $\supp \mu \subset X$,
and $\mathcal M^+(X) \subset \mathcal M(X)$  is the subclass of all Borel positive  measures on $X$,  
$\mathcal M_{\comp}^+(X) :=\mathcal M^+(X) \cap \mathcal M_{\comp}(X)$. 
Given  $\mu\in  \mathcal M(X) $ and $S\subset X$, we denote by $\mu\bigm|_S$ the restriction of $\nu$ to $S$. For $\nu \in \mathcal M(X)$, we write ``$\nu \geq \mu$ {\it on\/} $S$\,'' if $\bigl(\nu\bigm|_S-\mu\bigm|_S\bigr)\in \mathcal M^+(S)$.

Let $\mathcal O$ be a non-trivial  open subset of $\mathbb R^m _{\infty}$. We denote by $\sbh (\mathcal O)$ the class of  all subharmonic functions $u\colon \mathcal O\to \RR_{-\infty}$ on $\mathcal O$ for $m\geq 2$, and all (local) convex functions  $u\colon \mathcal O\to \RR_{-\infty}$ on $\mathcal O$ for $m=1$. The class $\sbh (\mathcal O)$  contains the function 
$\boldsymbol{-\infty}\colon x\mapsto -\infty$, $x\in \mathcal O$  (identical to $-\infty$); $\sbh^+(\mathcal O):=
\{u\in \sbh (\mathcal O)\colon u\geq 0 \text{ on $\mathcal O$}\}$. We set $\sbh_*(\mathcal O):=\sbh\,(\mathcal O)\setminus \{\boldsymbol{-\infty}\}$. For $u\in \sbh_*(\mathcal O)$, the {\it  Riesz measure of\/} $u$ is the  Borel  positive 
 measure 
\begin{equation}\label{df:cm}
	\nu_u:= c_m \,\Delta u\in \mathcal M^+(\mathcal O),  \quad c_m:=\frac{\Gamma(m/2)}{2\pi^{m/2}\max\bigl\{1, (m-2)\bigr\}}\,,
\end{equation}
where $\Delta$ is  the {\it Laplace operator\/}  acting in the sense of distribution theory, and $\Gamma$ is the gamma function. 
Such measures $\nu_u$ is Radon measures, i.\,e. $\nu_u(S)<+\infty$ for each subset $S\Subset \mathcal O$.
By definition, $\nu_{\boldsymbol{-\infty}}(S):=+\infty$ for all $S\subset \mathcal O$. 

Let $\mathcal O$ be a non-trivial  open subset of $\mathbb C^n_{\infty}$. We denote by $\Hol (\mathcal O)$ and $\sbh (\mathcal O)$ the class of holomorphic  and subharmonic functions on $\mathcal O$, resp.
For $u\in \sbh_*(\mathcal O)$, the {\it  Riesz measure of\/} $u$ is the  Borel (and  Radon) positive measure 
\begin{equation*}
	\nu_u:=c_{2n} \, \Delta u\in \mathcal M^+(\mathcal O), \quad c_{2n}=\frac{(n-1)!}{2\pi^n \max\{1,2n-2\}}\,.
\end{equation*}

\subsection{Main Theorem and Corollary}

\begin{definition} Let $D$ be a  non-trivial  open connected  subset of\/ $\mathbb R^m_{\infty}$, i.\,e. $D$ is a non-trivial domain in\/  $\mathbb R^m_{\infty}$. Let  $K$   be a non-trivial compact subset of $D$, i.\,e. $\varnothing \neq K=\clos K\subset D$.
A function $v\in \sbh^+ (D\setminus K)$ is called a  test function for $D$ outside of $K$ if 
\begin{equation}\label{v0l}
\lim_{D\ni x'\to x} v(x')=0 \quad \text{for each  $x\in \partial D$}
\quad \text{and}\quad  \sup_{x\in D\setminus K}v(x)<+\infty.
\end{equation}
The class of all such  test functions for $D$ outside of $K$ is denoted by $\sbh_0^+(D\setminus K)$.
\end{definition}

Our main result for subharmonic functions is the following 
\begin{theorem}[{see \cite[Corollary 1.1]{KhT15} for the case $m=2$}]\label{th:s} 
Let\/ $D$ be a non-trivial domain in $\RR^m_{\infty}$, $K$ a  compact subset of $D$
with non-empty interior $\Int K\neq \varnothing$. Let  $M\in \sbh_*(D)$ be a function  with the Riesz measure $\nu_M\in \mathcal M^+(D)$, $ v\in \sbh_0^+(D\setminus K)$ a test function for $D$ outside of $K$. Assume that
\begin{equation}\label{co:M}
	\int_{D\setminus K} v\, {\rm d} \nu_M<+\infty.
\end{equation}
If $u\in \sbh(D)$ is a function  with the Riesz measure $\nu_u \in \mathcal M^+(D)$ such that
\begin{subequations}\label{s:}
\begin{align} 
\nu_u&\geq \nu\in \mathcal M^+(D\setminus K) \text{ on } D\setminus K, 
\tag{\ref{s:}a}\label{s:a}\\
\quad  \int_{D\setminus K}v\, {\rm d} \nu&=+\infty,
\tag{\ref{s:}b}\label{s:b}
\\
u&\leq M+\const  \text{ on } D,
\tag{\ref{s:}c}\label{s:c}
\end{align}
\end{subequations}
where\/ $\const$ is a constant, then $u=\boldsymbol{-\infty}$.
\end{theorem}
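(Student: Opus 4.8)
The plan is to argue by contradiction. Suppose $u\in\sbh_*(D)$, that is, $u\not\equiv\boldsymbol{-\infty}$; I will show that then $\int_{D\setminus K}v\,\mathrm{d}\nu_u<+\infty$, which is impossible, since \eqref{s:a} and \eqref{s:b} force $\int_{D\setminus K}v\,\mathrm{d}\nu_u\geq\int_{D\setminus K}v\,\mathrm{d}\nu=+\infty$. First I would absorb the constant of \eqref{s:c} into $M$ (this does not affect $\nu_M$), so that $u\leq M$ on $D$, and put $\psi:=M-u$. Then $\psi\geq 0$ on $D$, and $\psi$ is a $\delta$-subharmonic function with Riesz charge $\nu_M-\nu_u$, finite quasi-everywhere because $u,M\in\sbh_*(D)$. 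The crucial point is that hypothesis \eqref{s:c} is used exactly to secure $\psi\geq 0$, while \eqref{co:M} provides the finiteness $A:=\int_{D\setminus K}v\,\mathrm{d}\nu_M<+\infty$.

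The engine of the proof is a Green--Stokes identity for the pair $(v,\psi)$ on the ring domain $D\setminus K$. Fix an auxiliary smooth compact $K\subset K'\Subset D$ and, for small $t>0$, exhaust $D\setminus K'$ by the superlevel sets $\Omega_t:=\{x\in D\setminus K':v(x)>t\}$; after mollifying $u$, $M$, and $v$ on an interior exhaustion and passing to the limit, the classical identity yields, up to the constant $c_m$ of \eqref{df:cm},
\[
\int_{\Omega_t}v\,\mathrm{d}\nu_u=\int_{\Omega_t}v\,\mathrm{d}\nu_M-\int_{\Omega_t}\psi\,\mathrm{d}\nu_v-c_m\!\int_{\partial\Omega_t}\bigl(v\,\partial_n\psi-\psi\,\partial_n v\bigr)\,\mathrm{d}S.
\]
Here $\int_{\Omega_t}v\,\mathrm{d}\nu_M\leq A<+\infty$ and $\int_{\Omega_t}\psi\,\mathrm{d}\nu_v\geq 0$, because $\psi\geq 0$ and $\nu_v\in\mathcal M^+(D\setminus K)$; both terms therefore push the right-hand side down, which is precisely what an upper bound requires.

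It then remains to dominate the boundary integral, which splits over the fixed inner surface $\partial K'$ and the outer level set $\{v=t\}$. Over $\partial K'$ the contribution stays bounded uniformly in $t$: there $v$, $\psi$, and their approximants are bounded and the measures are Radon, and it is precisely the hypothesis $\Int K\neq\varnothing$ that lets one interpose such a genuine smooth inner boundary carrying the data. On $\{v=t\}$ one has $v\equiv t$, so the piece $\psi\,\partial_n v$ carries the favorable sign: the outward normal points toward $\partial D$, along which $v\geq 0$ decreases to $0$ by \eqref{v0l}, whence $\partial_n v\leq 0$ and $+c_m\int_{\{v=t\}}\psi\,\partial_n v\leq 0$. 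The only genuinely delicate term is $c_m t\int_{\{v=t\}}\partial_n\psi\,\mathrm{d}S$, which I would force to $0$ along a suitable sequence $t_j\downarrow 0$ by a co-area argument. Combining these estimates gives $\int_{\Omega_{t_j}}v\,\mathrm{d}\nu_u\leq A+\const+o(1)$, and letting $j\to\infty$, together with the trivially finite $\int_{K'\setminus K}v\,\mathrm{d}\nu_u$, yields $\int_{D\setminus K}v\,\mathrm{d}\nu_u<+\infty$ --- the sought contradiction, so that $u=\boldsymbol{-\infty}$.

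I expect the main obstacle to be the rigorous justification of this Green--Stokes identity and of the passage $t_j\downarrow 0$ for the merely $\delta$-subharmonic $\psi$ and subharmonic $v$ on a domain $D$ whose boundary need not be smooth: one must mollify, apply the smooth identity, and pass to the limit while ruling out any loss of boundary mass at $\partial D$, for which the vanishing of $v$ there in \eqref{v0l} is decisive. If $\infty\in D$, a preliminary Kelvin transform reduces everything to the finite case.
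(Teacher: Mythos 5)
Your overall architecture is viable and is genuinely different from the paper's: you integrate by parts (Green--Stokes) for the pair $(v,\psi)$, $\psi:=M-u$, over superlevel sets $\Omega_t=\{v>t\}$ and try to control boundary terms, whereas the paper avoids boundary integrals altogether. It extends $v$ by zero across $\partial D$ (this is what the gluing theorem and the boundary condition \eqref{v0l} are for), attaches a Green-function singularity at a pole $x_0\in\Int K$ --- which is the actual role of the hypothesis $\Int K\neq\varnothing$, not the interposition of a smooth inner boundary, since a smooth $K'\Supset K$ exists regardless --- truncates from below to get compactly supported Jensen potentials $V_n=\max\{0,V-1/n\}$, and applies the global Poisson--Jensen identity \eqref{f:PJ} together with balayage off a small ball. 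Your route, when it works, buys a more elementary and self-contained argument; the paper's route buys freedom from all regularity questions about level sets, mollification limits near $\partial D$, and traces of $\nabla\psi$ on hypersurfaces, all of which you acknowledge but leave unexecuted.

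There is, however, one concrete step in your plan that fails: the claim that the term $c_m t\int_{\{v=t\}}\partial_n\psi\,{\rm d}S$ can be forced to $0$ along a sequence $t_j\downarrow 0$ by a co-area argument. By the divergence theorem this term equals, up to a bounded contribution from the inner boundary, $t\bigl(\nu_u(\Omega_t)-\nu_M(\Omega_t)\bigr)$. While $t\,\nu_M(\Omega_t)\to 0$ follows from \eqref{co:M} by a Chebyshev-type splitting, the quantity $t\,\nu_u(\{v>t\})$ is of exactly the same order as the integral $\int v\,{\rm d}\nu_u$ you are trying to bound; in the bad scenario you must rule out it can tend to $+\infty$ (e.g.\ masses $4^k$ on $\{v\approx 2^{-k}\}$ give $t\,\nu_u(\{v>t\})\to\infty$ along every sequence), so any attempt to kill it a priori is circular. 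The correct move is the opposite one: keep the term, note that it enters the right-hand side as $+t\,\nu_u(\Omega_t)+O(1)-t\,\nu_M(\Omega_t)$, and absorb $t\,\nu_u(\Omega_t)$ into the left-hand side, which converts the estimate into $\int_{\Omega_t}(v-t)^+\,{\rm d}\nu_u\leq\int_{D\setminus K}v\,{\rm d}\nu_M+\const$, after which monotone convergence as $t\downarrow 0$ finishes the proof. This truncation is precisely the device $V_n=\max\{0,V-1/n\}$ in the paper's proof, so your argument is repairable, but as written the handling of the one term you yourself single out as ``genuinely delicate'' is wrong.
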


We denote by $ \sigma_{2n-2}$ the {\it $(2n-2)$-dimensional surface\/} ($\Leftrightarrow$Hausdorff) {\it measure\/} on $\mathbb C^n$
and its restrictions to subsets of $\mathbb C^n$. So, if $n=1$, i.\,e. $2n-2=0$, then $\sigma_0(S)=\sum_{z\in S}1$ for each $S\subset \mathbb C$, i.\,e. $\sigma_0(S)$ is equal to the number of points in the set $S\subset \mathbb C$.
 
Below we identify $\mathbb C^n $ (or $\mathbb C^n _{\infty}$) with $\mathbb R^{m}$ (or $\mathbb R^{m}_{\infty}$) where 
 $m=2n$.

Our main result for holomorphic  functions is the following 

\begin{corollary}\label{th:h} Let all conditions of Theorem\/ {\rm \ref{th:s}} are fulfilled including\/ \eqref{co:M}. 
Let  $f\in \Hol (D)$ be  a holomorphic function on $D$ and 
$\Zero_f:=\{z\in D \colon f(z)=0\}$. 
If 
\begin{subequations}\label{h:}
\begin{align} 
{\tt Z}&\subset(D\setminus K) \cap  \Zero_f,
\tag{\ref{h:}a}\label{h:a}\\
\int_{\tt Z}v\, {\rm d} \sigma_{2n-2}&=+\infty,
\tag{\ref{h:}b}\label{h:b}
\\
|f|&\leq \const \,e^M\text{ on } D,
\tag{\ref{h:}c}\label{h:c}
\end{align}
\end{subequations}
where $\const$ is a constant, then $f\equiv 0$ on $D$, i.\,e. $\Zero_f=D$.
\end{corollary}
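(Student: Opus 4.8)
The plan is to derive Corollary 2 from Theorem 1 by constructing, from the holomorphic data, subharmonic objects to which the theorem applies. The natural choice is to set $u:=\log|f|\in\sbh(D)$ (with the convention that $u\equiv\boldsymbol{-\infty}$ precisely when $f\equiv 0$, which is the conclusion we are after). The growth hypothesis \eqref{h:c} then reads $\log|f|\leq\log\const+M$ on $D$, i.e.\ $u\leq M+\const'$ on $D$ with $\const'=\log\const$, which is exactly condition \eqref{s:c}. So two of the three hypotheses \eqref{s:a}--\eqref{s:c} transfer almost immediately; the whole argument hinges on producing a measure $\nu\in\mathcal M^+(D\setminus K)$ that both minorizes $\nu_u$ on $D\setminus K$ and satisfies the divergence condition \eqref{s:b}.

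The key step is the comparison between the Riesz measure $\nu_u$ of $u=\log|f|$ and the surface measure $\sigma_{2n-2}$ on the zero set. By the Poincar\'e--Lelong formula, the Riesz measure of $\log|f|$ equals (up to the normalizing constant $c_{2n}$ fixed in the paper) the integration current over the zero divisor of $f$, whose $(2n-2)$-dimensional mass on any Borel set is exactly $\sigma_{2n-2}$ counted with multiplicities. In particular $\nu_{\log|f|}\geq c\,\sigma_{2n-2}\bigm|_{\Zero_f}$ as positive measures for a suitable positive constant $c$ (multiplicities are $\geq 1$ on the zero set, so ignoring them only decreases the mass). I would therefore define
\begin{equation*}
\nu:=c\,\sigma_{2n-2}\bigm|_{\tt Z}\in\mathcal M^+(D\setminus K),
\end{equation*}
using \eqref{h:a} to guarantee $\operatorname{supp}\nu\subset D\setminus K$. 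The Poincar\'e--Lelong inequality then gives $\nu_u\geq\nu$ on $D\setminus K$, which is \eqref{s:a}. For \eqref{s:b}, I integrate the test function $v$ against $\nu$: since $v\geq 0$ and $c>0$,
\begin{equation*}
\int_{D\setminus K}v\,{\rm d}\nu=c\int_{\tt Z}v\,{\rm d}\sigma_{2n-2}\overset{\eqref{h:b}}{=}+\infty,
\end{equation*}
so \eqref{s:b} holds as well.

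With all hypotheses \eqref{s:a}--\eqref{s:c} verified (and \eqref{co:M} assumed throughout), Theorem \ref{th:s} applies to $u=\log|f|$ and forces $u=\boldsymbol{-\infty}$ on $D$, i.e.\ $\log|f|\equiv-\infty$, which means $f\equiv 0$ on the connected set $D$; hence $\Zero_f=D$, the desired conclusion. The main obstacle I anticipate is purely technical rather than conceptual: making the Poincar\'e--Lelong step fully rigorous with the paper's normalization constant $c_{2n}$, and handling the boundary case $n=1$ where $\sigma_0$ is counting measure and the comparison $\nu_{\log|f|}=\sum_{f(z)=0}(\text{mult})\,\delta_z\geq\sigma_0\bigm|_{\tt Z}$ must be read as a statement about zeros counted without multiplicity. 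One should also confirm that $u=\log|f|$ genuinely lies in $\sbh(D)$ and is not \emph{a priori} excluded, but since the statement permits $u\in\sbh(D)$ including the possibility $u=\boldsymbol{-\infty}$, this causes no difficulty; the whole reduction is then clean.
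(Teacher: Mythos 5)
Your proposal is correct and follows essentially the same route as the paper: both reduce to Theorem \ref{th:s} with $u:=\log|f|$ and use the Poincar\'e--Lelong identification of $\nu_{\log|f|}$ with $n_f\,{\rm d}\sigma_{2n-2}$ to pass from \eqref{h:a}--\eqref{h:b} to \eqref{s:a}--\eqref{s:b}. The only cosmetic difference is that you verify the hypotheses directly with $\nu:=\sigma_{2n-2}\bigm|_{\tt Z}$ (after disposing of the trivial case $f\equiv 0$), whereas the paper assumes $f\neq 0$ and derives a contradiction with \eqref{h:b}; with the paper's normalization of the Riesz measure your constant $c$ is simply $1$.
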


\begin{proof} Under the conditions of Corollary \ref{th:h}, suppose that $f\neq 0$. Then we have $\log |f|\in \sbh_* (D)$ with the Riesz measure $\nu_{\log |f|}\in \mathcal  M^+(D)$. Let $n_f\colon D\to \{0\}\cup \NN$ be the {\it multiplicity function of\/} $f$ \cite[4]{Chee}. It is known that $\supp n_f=\Zero_f$. By the classical Poincar\'e\,--\,Lelong formula \cite{L} we have
	$n_f\,{\rm d}\sigma_{2n-2}={\rm d} \nu_{\log |f|}$ on $D$. 
	Hence, if the condition \eqref{h:a} is fulfilled then 	we get
\begin{equation}\label{int:Z}
\int_{\tt Z}v\, {\rm d} \sigma_{2n-2}\overset{\eqref{h:a}}{\leq} \int v \,n_f\, {\rm d} \sigma_{2n-2}
\leq \int v\, {\rm d}\nu_{\log |f|}. 
\end{equation}
If the condition \eqref{h:c}   is also fulfilled, then for $u:=\log |f|$ we have \eqref{s:c}  
together with \eqref{s:a} for $\nu=\nu_u=\nu_{\log |f|}$. 
Since $u\neq \boldsymbol{-\infty}$, by Theorem \ref{th:s} we obtain  the negation  of the equality \eqref{s:b}.
Therefore we get
\begin{equation*}
\int_{\tt Z}v\, {\rm d} \sigma_{2n-2}\overset{\eqref{int:Z}}{\leq} \int v\,{\rm d} \nu_{\log|f|}=\int v\,{\rm d} \nu_{u}<+\infty
\end{equation*}
 what contradicts \eqref{h:b}. Corollary \ref{th:h} is proved.~{$\blacktriangleleft$}
\end{proof}

\section{Main results}

\subsection{Gluing Theorem for $m\in \NN$}

The next result shows how two subharmonic functions can be glued together. 
\begin{theorem}[{\rm see \cite[Corollary 2.4.5]{Klimek}, and \cite[Theorem 2.4.5]{R} for $m=2$}]\label{th:g}
Let $\mathcal  O, O_0$ are open sets in $\RR^m_{\infty}$, and 
$\mathcal O \subset \mathcal O_0$. Let $v_0\in \sbh (\mathcal O_0)$, and $v\in \sbh (\mathcal O)$.  If 
\begin{equation}\label{0vs}
	\limsup_{\mathcal O\ni  x'\to x} v(x')\leq v_0(x) \quad \text{for all points $x\in \mathcal O_0\cap \partial \mathcal O$},
\end{equation}
then the function
\begin{equation}\label{consv}
	\widetilde{v}:=\begin{cases}
\max\{v,v_0\} \quad &\text{on\/ $\mathcal O$},\\
v_0		\quad &\text{on\/ $\mathcal O_0\setminus \mathcal O$},
	\end{cases}
\end{equation}
belong  to the class $\sbh(\mathcal O_0)$. 
\end{theorem}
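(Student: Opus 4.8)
The plan is to verify that $\widetilde v$ is subharmonic by checking the two defining properties --- upper semicontinuity together with the sub-mean value inequality over small spheres --- locally near each point $x_0\in\mathcal O_0$. Since subharmonicity is a purely local property, it suffices to split $\mathcal O_0$ into the three pieces $\mathcal O$, the open set $\mathcal O_0\setminus\clos\mathcal O$, and the interface $\mathcal O_0\cap\partial\mathcal O$, and to treat each in turn. Throughout I would use two elementary observations. First, the restriction $v_0\bigm|_{\mathcal O}$ lies in $\sbh(\mathcal O)$, so on $\mathcal O$ the function $\max\{v,v_0\}$ is a maximum of two subharmonic functions and hence itself subharmonic. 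Second, by construction $\widetilde v\geq v_0$ everywhere on $\mathcal O_0$: on $\mathcal O$ because $\widetilde v=\max\{v,v_0\}\geq v_0$, and on $\mathcal O_0\setminus\mathcal O$ because there $\widetilde v=v_0$.

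On the two open pieces the claim is immediate. If $x_0\in\mathcal O$, then $\mathcal O$ is a neighborhood of $x_0$ on which $\widetilde v=\max\{v,v_0\}$, subharmonic by the first observation. If $x_0\in\mathcal O_0\setminus\clos\mathcal O$, then this open set is a neighborhood of $x_0$ on which $\widetilde v=v_0\in\sbh(\mathcal O_0)$. So it remains only to treat the interface points $x_0\in\mathcal O_0\cap\partial\mathcal O$, where $\widetilde v(x_0)=v_0(x_0)$.

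The first thing I would establish at an interface point is upper semicontinuity, and this is the one step that genuinely uses the hypothesis \eqref{0vs}. Approaching $x_0$ through $\mathcal O_0\setminus\mathcal O$, one has $\widetilde v=v_0$, so $\limsup v_0\leq v_0(x_0)$ by the upper semicontinuity of $v_0$; approaching $x_0$ through $\mathcal O$, one has $\widetilde v=\max\{v,v_0\}$, and combining \eqref{0vs} with the upper semicontinuity of $v_0$ gives $\limsup_{\mathcal O\ni x'\to x_0}\max\{v(x'),v_0(x')\}\leq v_0(x_0)$. Taking both routes together yields $\limsup_{x'\to x_0}\widetilde v(x')\leq v_0(x_0)=\widetilde v(x_0)$, so $\widetilde v$ is upper semicontinuous at $x_0$; combined with the two open pieces, this shows $\widetilde v$ is upper semicontinuous on all of $\mathcal O_0$.

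Finally I would verify the sub-mean value inequality at an interface point $x_0$, which is now easy and does not use \eqref{0vs} at all. For every sufficiently small $r>0$ with $\clos B(x_0,r)\subset\mathcal O_0$, the domination $\widetilde v\geq v_0$ on the sphere $\partial B(x_0,r)$ and the subharmonicity of $v_0$ on $\mathcal O_0$ give
\[
\widetilde v(x_0)=v_0(x_0)\leq \text{(mean of $v_0$ over $\partial B(x_0,r)$)}\leq \text{(mean of $\widetilde v$ over $\partial B(x_0,r)$)}.
\]
Together with the inequalities already available on the two open pieces, this yields the sub-mean value property at every point of $\mathcal O_0$, and with the upper semicontinuity proved above we conclude $\widetilde v\in\sbh(\mathcal O_0)$. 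The main obstacle is the interface analysis, and specifically the upper semicontinuity there: once \eqref{0vs} is invoked to control the boundary values of $v$ from inside $\mathcal O$, the remaining verifications reduce to the elementary facts that a maximum of subharmonic functions is subharmonic and that $\widetilde v$ dominates $v_0$. For $m=1$ the same argument applies verbatim, with the sphere $\partial B(x_0,r)=\{x_0-r,\,x_0+r\}$ and the two-point mean, which is exactly the convexity inequality.
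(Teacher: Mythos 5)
Your proof is correct and is essentially the standard argument from the sources the paper itself cites for this statement (Klimek, Corollary 2.4.5; Ransford, Theorem 2.4.5) rather than proving it in the text: verify upper semicontinuity and the local sub-mean value inequality separately on $\mathcal O$, on $\mathcal O_0\setminus\clos\mathcal O$, and at the interface $\mathcal O_0\cap\partial\mathcal O$, using \eqref{0vs} only for the upper semicontinuity at interface points and the domination $\widetilde v\geq v_0$ for the mean value inequality there. The only detail left implicit is that for points of $\RR^m_{\infty}$ at infinity the sphere-mean criterion must be read in a local chart (e.g.\ via inversion), which is routine.
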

\begin{remark}\label{r:1} 
{\rm A similar gluing  theorem true also for classes of  plurisubharmonic functions \cite[Corollary 2.9.5]{Klimek}.}
\end{remark}

\subsection{Jensen measures and potentials}
Let $m\in \NN$. Given $t\in \RR_*:=\RR\setminus \{0\}$, we set 
\begin{equation*}
	h_m(t):=
\begin{cases}
|t|\quad &\text{for $m=1$},\\
\log|t|\quad &\text{for $m=2$},\\
-\dfrac{1}{|t|^{m-2}}\quad &\text{for $m\geq 3$}.
\end{cases}
\end{equation*}

For simplicity, we consider only domains $D$ in $\RR^m\subset \RR_{\infty}^m$, i.\,e. $\infty \notin D$.

\begin{definition}[{\rm \cite{Gam}--\cite{Khab03}}]\label{df:mJ} Let $D\subset \RR^{m}$ be a  subdomain, $x_0\in D$. 
A measure $\mu \in \mathcal M_{\comp}^+(D)$ is called the Jensen measure for $\sbh(D)$ 
at  $x_0\in D$  if
\begin{equation*}
	u(x_0)\leq \int u\,{\rm d} \mu \quad\text{for all $u\in \sbh(D)$.}
\end{equation*}
By $J_{x_0}(D)$ we denote the class of all Jensen measures for $D$ at $x_0$. 
Each Jensen measure $\mu\in J_{x_0}(D)$ is a probability measure, i.\,e. $\mu (D)=1$. 

For $\mu\in J_{x_0}(D)$ we shall say that the function 
\begin{equation}\label{df:Vmu}
	V_{\mu}(x):=\int h_m(x-y) \, {\rm d} \mu (y)-h_m(x-x_0), \quad x\in \RR^m_{\infty}\setminus \{x_0\},
\end{equation}
is a potential of the Jensen measure $\mu\in J_0(D)$. 

A function $V\in \sbh^+\bigl(\RR^m_{\infty}\setminus \{x_0\}\bigr)$ is called the Jensen potential   inside of $D$ with pole at $x_0\in D$ if 
 the following two conditions hold:
\begin{enumerate}[{\rm (i)}]
\item\label{V:i}  there is a compact subset $K_V\subset D$ such that $V\equiv 0$ on $\RR^m_{\infty}\setminus K_V$ {\rm (finiteness),}
\item\label{V:ii} and $\limsup\limits_{x_0\neq x\to x_0}\dfrac{V(x)}{\bigl|h_m(x-x_0)\bigr|} \leq 1$
{\rm (semi-normalization at $x_0$).}
\end{enumerate}
By $PJ_{x_0}(D)$ we denote the class of all Jensen potentials inside of $D$ with pole at $x_0\in D$. 
\end{definition}
We present interrelations between Jensen measures and potentials. The first is

\begin{proposition}[{\rm \cite[Proposition 1.4, Duality Theorem]{Khab03}}]\label{pr:1} The map  
\begin{equation}\label{con:P}
	\mathcal P \colon J_{x_0}(D)\to PJ_{x_0} (D), \quad 
	\mathcal P (\mu)\overset{\eqref{df:Vmu}}{:=} V_{\mu}, \quad \mu \in  J_{z_0}(D),
\end{equation}
is the bijection from $J_{x_0}(D)$ to $PJ_{x_0}(D)$ such that 
$\mathcal P\bigl(t\mu_1+(1-t)\mu_2\bigr)=t\mathcal P (\mu_1)+(1-t)\mathcal P (\mu_2)$ for all  $t\in [0,1]$ and for  all $\mu_1,\mu_2\in J_{x_0}(D)$. Besides, 
\begin{equation}\label{eq:mu}
	{{\mathcal P}}^{-1}(V)\overset{\eqref{df:cm}}
	{=}c_m \,\Delta  V\Bigm|_{D\setminus \{x_0\}}+
	\left(1-\limsup\limits_{x_0\neq x \to x_0}\dfrac{V(x )}{|h_m(x-x_0)|}\right)\cdot
	{\delta}_{x_0}\, , \quad V\in PJ_{x_0}(D),
\end{equation}
where $\delta_{x_0}$ is the Dirac measure at the point $x_0$, i.\,e. $\supp \delta_{x_0}=\{x_0\}$ and
$\delta_{x_0}\bigl(\{x_0\}\bigr)=1$.
\end{proposition}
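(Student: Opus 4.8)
The plan is to read off the three defining properties of a Jensen potential from the single distributional identity
\[
c_m\,\Delta V_\mu=\mu-\delta_{x_0}\qquad\text{on }\RR^m_\infty ,
\]
and then to invert the construction by running the same identity backwards. \emph{Step 1 (the key identity).} The normalization of $c_m$ in \eqref{df:cm} is chosen exactly so that $h_m(\cdot-x_0)$ is a fundamental potential, i.e.\ $c_m\Delta h_m(\cdot-x_0)=\delta_{x_0}$. Differentiating \eqref{df:Vmu} under the integral sign and using this, I obtain the displayed identity. In particular the Riesz measure of $V_\mu$ on $\RR^m_\infty\setminus\{x_0\}$ is $\mu\geq 0$, so $V_\mu\in\sbh\bigl(\RR^m_\infty\setminus\{x_0\}\bigr)$; the only point needing care is $\infty$: for $m\geq 3$ both terms in \eqref{df:Vmu} already vanish there, for $m=2$ the mass $\mu(D)=1$ makes the logarithmic growths cancel, and for $m=1$ the equality of barycenters (forced by applying the Jensen inequality to the affine functions $\pm x$) does the same, so that in every case $V_\mu$ extends (sub)harmonically across $\infty$ with $V_\mu(\infty)=0$.

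\emph{Step 2 ($\mathcal P$ maps into $PJ_{x_0}(D)$).} Positivity is where the Jensen property enters: for each fixed $x\neq x_0$ the function $y\mapsto h_m(y-x)$ is subharmonic on $\RR^m$, hence lies in $\sbh(D)$, so the defining inequality of $J_{x_0}(D)$ yields $h_m(x_0-x)\leq\int h_m(y-x)\,d\mu(y)$; since $h_m$ is even this says precisely $V_\mu(x)\geq 0$, i.e.\ $V_\mu\in\sbh^+\bigl(\RR^m_\infty\setminus\{x_0\}\bigr)$. For the semi-normalization (ii) I would use the standard asymptotics of potentials: writing $a:=\mu(\{x_0\})$, one has $\int h_m(x-y)\,d\mu(y)=a\,h_m(x-x_0)+o\bigl(|h_m(x-x_0)|\bigr)$ as $x\to x_0$, whence
\[
\limsup_{x_0\neq x\to x_0}\frac{V_\mu(x)}{|h_m(x-x_0)|}=1-a\leq 1 .
\]
This already identifies the atom $\mu(\{x_0\})=1-\limsup(\cdots)$, the coefficient appearing in \eqref{eq:mu}. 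The finiteness (i) is the delicate point: $V_\mu$ is nonnegative, harmonic off $\supp\mu\cup\{x_0\}$ and vanishes at $\infty$, so by the minimum principle it vanishes identically on the unbounded component of $\RR^m_\infty\setminus(\supp\mu\cup\{x_0\})$; consequently the set where $V_\mu\neq 0$ is bounded. I expect the main obstacle to be upgrading this to $K_V\Subset D$: one must rule out that $V_\mu$ remains positive on a bounded complementary ``hole'' reaching to $\partial D$ or beyond, and this is forced by the Jensen property itself (regions enclosed by $\supp\mu$ must lie in $D$, as the sub-mean-value examples show), so that $K_V:=\clos\{V_\mu\neq0\}$ is a compact subset of $D$.

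\emph{Step 3 (inversion, surjectivity, injectivity).} Conversely, given $V\in PJ_{x_0}(D)$ I would define $\mu$ by the right-hand side of \eqref{eq:mu}. Its positivity comes from $\Delta V\geq 0$ together with $1-\limsup(\cdots)\geq 0$ from (ii); its compact support inside $D$ from (i); and $\mu(D)=1$ from the vanishing of the total charge of $c_m\Delta V=\mu-\delta_{x_0}$, itself a consequence of $V\equiv0$ near $\infty$. That $\mathcal P(\mu)=V$ follows because $V$ and $V_\mu$ then have the same Riesz measure and both vanish at $\infty$, so their difference is harmonic on all of $\RR^m_\infty$ and tends to $0$, hence is $\equiv0$. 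Finally $\mu\in J_{x_0}(D)$ is obtained from the duality identity: for $u\in\sbh(D)$, integrating by parts twice (the boundary terms vanish precisely because $\supp V\Subset D$ by (i)) gives
\[
\int u\,d\mu-u(x_0)=\int u\,d(\mu-\delta_{x_0})=\int V\,d\nu_u\geq 0 ,
\]
since $V\geq0$ and $\nu_u\geq0$. Injectivity is immediate from \eqref{eq:mu}, which recovers $\mu$ from $V_\mu$, and the affinity of $\mathcal P$ is clear because $\mu\mapsto\int h_m(x-y)\,d\mu(y)$ is linear while $h_m(x-x_0)$ does not depend on $\mu$. The two genuinely technical points—justifying $K_V\Subset D$ in Step 2 and the vanishing of the boundary terms in the duality identity of Step 3 (which again rests on (i))—are the crux; everything else is bookkeeping around the identity $c_m\Delta V_\mu=\mu-\delta_{x_0}$.
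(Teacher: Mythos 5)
The paper itself offers no proof of this proposition --- it is quoted from \cite[Proposition 1.4]{Khab03} --- so there is nothing internal to compare against. On its own terms your outline follows the standard route: the identity $c_m\Delta V_\mu=\mu-\delta_{x_0}$, positivity of $V_\mu$ from the Jensen inequality applied to the subharmonic kernels $y\mapsto h_m(x-y)$, the classical asymptotics at the pole giving $\limsup V_\mu/|h_m(\cdot-x_0)|=1-\mu(\{x_0\})$, and inversion via the Riesz decomposition plus Liouville. Steps 1 and 3 are essentially sound (modulo the routine approximation needed to justify the integration by parts for non-smooth $u\in\sbh(D)$, which is exactly the generalized Poisson--Jensen formula of Proposition \ref{pr:2}).

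The genuine gap is in your treatment of property (\ref{V:i}), the compact support $K_V\Subset D$, which you rightly flag as the crux but then dispose of with a false premise. It is \emph{not} true that ``regions enclosed by $\supp\mu$ must lie in $D$'': take $D$ an annulus in $\RR^2$, $x_0\in D$, and $\mu$ the harmonic measure at $x_0$ of a slightly thinner closed sub-annulus; this is a Jensen measure for $\sbh(D)$ whose support encloses the inner complementary component of $D$. The correct mechanism is different and simpler. For every $x\notin D$ the kernel $y\mapsto h_m(x-y)$ is harmonic on all of $D$, so both it and its negative belong to $\sbh(D)$, and applying the Jensen inequality to the pair forces equality, $\int h_m(x-y)\,{\rm d}\mu(y)=h_m(x-x_0)$; hence $V_\mu\equiv 0$ on $\RR^m_\infty\setminus D$. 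Now combine this with your minimum-principle argument: on each component $H$ of $\RR^m_\infty\setminus(\supp\mu\cup\{x_0\})$ the nonnegative harmonic function $V_\mu$ is either identically $0$ or strictly positive, so every component meeting $\RR^m_\infty\setminus D$ (in particular the unbounded one) is of the first kind, and the set $\{V_\mu\neq 0\}$ is contained in $\supp\mu\cup\{x_0\}$ together with those bounded components lying entirely in $D$; since the boundary of each such component lies in $\supp\mu\cup\{x_0\}$, the closure of this union is a compact subset of $D$, which is the required $K_V$. With this repair Step 2 closes. A smaller caveat you inherit from the statement itself: your pole asymptotics are the standard ones for $m\geq 2$, but for $m=1$ the kernel $h_1(t)=|t|$ vanishes at the pole and the semi-normalization (\ref{V:ii}) behaves quite differently, so the argument as written really covers $m\geq 2$ only.
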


The second is a generalized Poisson\,--\,Jensen formula.
\begin{proposition}[{\cite[Proposition 1.2]{Khab03}}]\label{pr:2}
Let  $\mu \in J_{x_0}(D)$. For each function $u\in \sbh (D)$ with $u(x_0)\neq -\infty$ and the Riesz measure
$\nu_u\in \mathcal M^+(D)$  we have the equality
\begin{equation}\label{f:PJ}
	u(x_0) +\int_{D\setminus \{x_0\}} V_{\mu} \,d {\nu}_u=\int_{D} u \,d \mu  .
\end{equation}
\end{proposition}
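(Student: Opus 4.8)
The plan is to reduce the formula to the classical Riesz representation of $u$ on a relatively compact subdomain and then to cancel the harmonic part using the defining inequality of the Jensen measure $\mu$. First I would record that, with the normalization \eqref{df:cm}, for each fixed $y$ the function $x\mapsto h_m(x-y)$ has Riesz measure $\delta_y$; this symmetry is what will drive the computation. By Proposition \ref{pr:1} we have $V_\mu=\mathcal P(\mu)\in PJ_{x_0}(D)$, so there is a compact $K_V\subset D$ with $V_\mu\equiv 0$ off $K_V$; since also $\mu\in\mathcal M^+_{\comp}(D)$, I can fix a relatively compact subdomain $\Omega$ with $K_V\cup\supp\mu\cup\{x_0\}\Subset\Omega\Subset D$ and carry out all computations inside $\Omega$, where $\nu_u$ is finite.

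Next, on $\Omega$ I would invoke the Riesz decomposition: writing $p(x):=\int_\Omega h_m(x-y)\,{\rm d}\nu_u(y)$ for the potential of the finite measure $\nu_u\big|_\Omega$, the function $H:=u-p$ is harmonic on $\Omega$. Because $u(x_0)\neq-\infty$, both $p(x_0)$ and $H(x_0)$ are finite, and $\int_D u\,{\rm d}\mu$ is finite as well, lying between $u(x_0)$ (by the Jensen inequality) and the finite upper bound of the upper semicontinuous $u$ on the compact $\supp\mu$. I then split
\[\int_D u\,{\rm d}\mu-u(x_0)=\Bigl(\int p\,{\rm d}\mu-p(x_0)\Bigr)+\Bigl(\int H\,{\rm d}\mu-H(x_0)\Bigr),\]
and apply the Jensen inequality to the two subharmonic functions $H$ and $-H$ to conclude $\int H\,{\rm d}\mu=H(x_0)$. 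This cancellation of the harmonic part is the conceptual heart of the argument.

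It remains to identify $\int p\,{\rm d}\mu-p(x_0)$ with $\int V_\mu\,{\rm d}\nu_u$. Here I would use Fubini's theorem together with the radial symmetry $h_m(x-y)=h_m(y-x)$: the interchange is legitimate because $p=u-H$ is bounded above on $\supp\mu$ and $\int p\,{\rm d}\mu$ is finite, whence $\int|p|\,{\rm d}\mu<\infty$ and the double integral $\iint|h_m(x-y)|\,{\rm d}\mu(x)\,{\rm d}\nu_u(y)$ over $\supp\mu\times\Omega$ converges. Swapping the order turns $\int p\,{\rm d}\mu-p(x_0)$ into $\int_\Omega\bigl(\int h_m(y-x)\,{\rm d}\mu(x)-h_m(y-x_0)\bigr)\,{\rm d}\nu_u(y)=\int_\Omega V_\mu\,{\rm d}\nu_u$ by \eqref{df:Vmu}. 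Since $V_\mu\equiv 0$ outside $K_V\subset\Omega$, this coincides with $\int_D V_\mu\,{\rm d}\nu_u$, and combining with the previous step yields \eqref{f:PJ}.

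The step I expect to be most delicate is the treatment of the pole of $V_\mu$ at $x_0$, which is precisely why \eqref{f:PJ} excludes this point from the domain of integration. One must check that the hypothesis $u(x_0)\neq-\infty$ forces $\nu_u$ to carry no mass at $x_0$, so that $\int_\Omega V_\mu\,{\rm d}\nu_u=\int_{D\setminus\{x_0\}}V_\mu\,{\rm d}\nu_u$ and the kernel singularity at $x_0$ is matched against a $\nu_u$-null point, and one must verify the finiteness statements underlying the use of Fubini and of the Riesz representation near the singularity of $h_m$ on the diagonal. Once these integrability and atom-at-$x_0$ points are settled on the fixed $\Omega$, the remaining manipulations are routine.
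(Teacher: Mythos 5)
The paper gives no proof of Proposition \ref{pr:2} at all: it is imported verbatim from \cite[Proposition 1.2]{Khab03}, so there is no internal argument to compare yours with. Judged on its own terms, most of your plan is sound: the choice of $\Omega$ with $K_V\cup\supp\mu\cup\{x_0\}\Subset\Omega\Subset D$, the Riesz decomposition $u=p+H$ on $\Omega$, the Fubini/Tonelli identification of $\int p\,{\rm d}\mu-p(x_0)$ with $\int_\Omega V_\mu\,{\rm d}\nu_u$ (the sign discussion for $h_m$ and the finiteness of $p(x_0)$ forced by $u(x_0)\neq-\infty$ do check out), and the observation that $u(x_0)\neq-\infty$ kills any atom of $\nu_u$ at $x_0$.

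The genuine gap is the cancellation of the harmonic part, which you yourself call the conceptual heart. You propose to get $\int H\,{\rm d}\mu=H(x_0)$ by applying the Jensen inequality to $H$ and $-H$; but Definition \ref{df:mJ} provides that inequality only for functions in $\sbh(D)$, whereas $H=u-p$ is harmonic only on $\Omega$ and in general admits no subharmonic extension to $D$. Worse, the statement you are implicitly invoking --- that $\int H\,{\rm d}\mu=H(x_0)$ for every $H$ harmonic on a neighbourhood of $\supp\mu\cup\{x_0\}$ --- is false: for $D=B(0,10)\subset\RR^2$, $x_0=0$, $\mu$ the normalized arclength measure on the unit circle (a Jensen measure for $D$ at $0$) and $H(x)=-\log\bigl|x-\frac{1}{2}\bigr|$, which is harmonic on a domain containing $\{0\}\cup\supp\mu$ but avoiding $\frac{1}{2}$, one has $\int H\,{\rm d}\mu=0\neq\log 2=H(0)$. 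What rescues your step is precisely the requirement $K_V\subset\Omega$, which you impose but never use here: since $c_m\Delta V_\mu=\mu-\delta_{x_0}$ as distributions on $\RR^m$ and $V_\mu$ vanishes off $K_V$, testing this identity against $\chi H$, where $\chi$ is smooth, compactly supported in $\Omega$ and identically $1$ on a neighbourhood of $K_V\cup\supp\mu\cup\{x_0\}$, yields $\int H\,{\rm d}\mu-H(x_0)=c_m\int V_\mu\,\Delta(\chi H)\,{\rm d}x=0$, because $\Delta(\chi H)$ is supported in the region where $\chi$ is non-constant, i.e.\ where $V_\mu\equiv0$. Until an argument of this kind (or an equivalent smoothing/balayage argument) is supplied, the harmonic cancellation, and with it the whole proof, remains unjustified.
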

Given $x\in \RR^m$ and $r\overset{\eqref{df:R}}{\in} \RR^+$, we set $B(x,r):=\{x'\in \RR^m \colon |x'-x|<r\}$, where $|\cdot |$ is the Euclidean norm  on  $ \RR^m$; $B_*(x,r):=B(x,r)\setminus \{x\}$; $\overline B(x,r):=\clos B(x,r)$. 

By $\Har (\mathcal O)$ denote the class of all harmonic functions on an open subset $\mathcal O\subset \RR^m$.  

\begin{corollary}\label{cr:J} Let $D$ be a domain in $\RR^m$, $x_0\in D$, $r_0>0$, and $B(x_0,r_0)\Subset D$; $b\in \RR^+$.  If  functions   $u\in \sbh_*(D)$ with the  Riesz measure $\nu_u$ and $M\in \sbh_*(D)$ with the Riesz measure $\nu_M$  satisfy the inequality
\begin{equation}\label{uMe}
	u\leq M+\const \quad\text{on $D$},
\end{equation}
then there is a constant $C\in \RR$ such that
\begin{equation}\label{inDuM}
	\int_{D\setminus \overline B(x_0,r_0)} V\,{\rm d}\nu_u\leq \int_{D\setminus \overline B(x_0,r_0)} V \,{\rm d}\nu_M+C
\end{equation}
 for all functions $V\in PJ_{x_0}(D)$ satisfying the  following three  conditions: 
\begin{subequations}\label{cVh}
\begin{align} 
V\bigm|_{B_*(x_0,r_0)}&\in \Har \bigl(B_*(x_0,r_0)\bigr),
\notag
\\ 
 \limsup\limits_{x_0\neq x\to x_0}\dfrac{V(x)}{\bigl|h_m(x-x_0)\bigr|}&\overset{\eqref{V:ii}}{=} 1 \quad\text{\rm (normalization at $x_0$),}
\notag
\\
\sup_{x\in \partial B(x_0,r_0)}V(x)\leq b.
\tag{\ref{cVh}}\label{cVh:b}
\end{align}
\end{subequations}
\end{corollary}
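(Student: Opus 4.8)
The plan is to convert the pointwise domination \eqref{uMe} into the integral bound \eqref{inDuM} by means of the generalized Poisson--Jensen formula \eqref{f:PJ}, passing from the prescribed potential $V$ to an associated Jensen measure through the duality of Proposition \ref{pr:1}. Fix $V\in PJ_{x_0}(D)$ satisfying the three conditions in \eqref{cVh}, and set $\mu:=\mathcal P^{-1}(V)\in J_{x_0}(D)$, so that $V=V_\mu$ in the sense of \eqref{df:Vmu}. Since the normalization at $x_0$ holds with equality, \eqref{eq:mu} shows that $\mu$ carries no atom at $x_0$; since $V$ is harmonic on $B_*(x_0,r_0)$, \eqref{eq:mu} also gives $\mu\bigl(B(x_0,r_0)\bigr)=0$. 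Hence $\supp\mu\subset D\setminus B(x_0,r_0)$, and the potential $U^\mu(x):=\int h_m(x-y)\,{\rm d}\mu(y)$ is harmonic on $B(x_0,r_0)$.

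The obstruction is that \eqref{f:PJ} requires $u(x_0)\neq-\infty$ and $M(x_0)\neq-\infty$, which may fail. To remove the pole I introduce the harmonic measure $\omega$ of $B(x_0,r_0)$ at $x_0$, i.e.\ the normalized surface measure on $\partial B(x_0,r_0)$: then $\omega\in J_{x_0}(D)$ by the spherical sub-mean value inequality, $\omega$ has no atom at $x_0$, and its potential $V_\omega:=\mathcal P(\omega)$ vanishes on $D\setminus\overline B(x_0,r_0)$ by Newton's theorem, while $U^\omega(x):=\int h_m(x-y)\,{\rm d}\omega(y)$ is harmonic on $B(x_0,r_0)$. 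Writing $V=U^\mu-h_m(\,\cdot\,-x_0)$ and $V_\omega=U^\omega-h_m(\,\cdot\,-x_0)$, the singular terms cancel, so $V-V_\omega=U^\mu-U^\omega$ is harmonic on $B(x_0,r_0)$. Subtracting the instance of \eqref{f:PJ} for $\omega$ from that for $\mu$ cancels $u(x_0)$ and leaves, for every $u\in\sbh_*(D)$, the identity $\int_{D\setminus\{x_0\}}(V-V_\omega)\,{\rm d}\nu_u=\int_D u\,{\rm d}\mu-A_u$, where $A_u:=\int u\,{\rm d}\omega$ is the (finite) spherical mean of $u$ over $\partial B(x_0,r_0)$; likewise for $M$, with $A_M$.

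Since $V_\omega\equiv0$ off $\overline B(x_0,r_0)$, splitting each integral over $D\setminus\{x_0\}=\bigl(D\setminus\overline B(x_0,r_0)\bigr)\cup\bigl(\overline B(x_0,r_0)\setminus\{x_0\}\bigr)$ turns this into
\[
\int_{D\setminus\overline B(x_0,r_0)}V\,{\rm d}\nu_u=\int_D u\,{\rm d}\mu-A_u-\int_{\overline B(x_0,r_0)\setminus\{x_0\}}(V-V_\omega)\,{\rm d}\nu_u,
\]
and the same for $M$. Subtracting these two and invoking \eqref{uMe} gives the claim: $\int_D(u-M)\,{\rm d}\mu\leq\const$ because $\mu(D)=1$; the quantity $A_M-A_u$ is a constant independent of $V$; and, as $V-V_\omega$ is harmonic on $B(x_0,r_0)$ with boundary values $V\bigm|_{\partial B(x_0,r_0)}\in[0,b]$ (from $V\geq0$ and \eqref{cVh:b}), the maximum principle yields $0\leq V-V_\omega\leq b$ on $\overline B(x_0,r_0)$; hence $\int_{\overline B(x_0,r_0)\setminus\{x_0\}}(V-V_\omega)\,{\rm d}\nu_M\leq b\,\nu_M\bigl(\overline B(x_0,r_0)\bigr)<+\infty$ (finiteness by Radonness of $\nu_M$), while the corresponding $\nu_u$-integral is $\geq0$. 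Collecting the three bounds proves \eqref{inDuM} with $C:=\const+(A_M-A_u)+b\,\nu_M\bigl(\overline B(x_0,r_0)\bigr)$, which is independent of $V$.

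I expect the main difficulty to be the rigorous justification of the subtracted identity when $u(x_0)=-\infty$ or $M(x_0)=-\infty$: all its terms are finite once the singular parts have cancelled in $V-V_\omega$, yet the subtraction of two copies of \eqref{f:PJ} is only immediate when the base-point values are finite. One recovers it by approximation --- replacing $u$ near $x_0$ by its spherical averages (whose base-point value is the finite mean over a small sphere) and letting the radius tend to $0$, passing to the limit under the integral sign against the bounded kernel $V-V_\omega$. A secondary technical point is verifying $\mu\bigl(B(x_0,r_0)\bigr)=0$ and the boundary behaviour needed for the maximum-principle bound $0\leq V-V_\omega\leq b$.
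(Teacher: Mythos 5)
Your argument is correct and reaches the same constant $C=\const+(A_M-A_u)+b\,\nu_M\bigl(\overline B(x_0,r_0)\bigr)$-type bound, but it is organized differently from the paper's proof. The paper regularizes the \emph{functions}: it sweeps $u$ and $M$ out of $B(x_0,r_0)$ to obtain $u_0,M_0\in\sbh_*(D)$ harmonic on the ball (hence finite at $x_0$), applies the Poisson--Jensen formula \eqref{f:PJ} to $u_0,M_0$ directly with $\mu=\mathcal P^{-1}(V)$, and the term $b\,\nu_M\bigl(\overline B(x_0,r_0)\bigr)$ arises from the swept mass of $\nu_{M_0}$ sitting on $\partial B(x_0,r_0)$. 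You instead regularize the \emph{test object}: you subtract from $V=V_\mu$ the potential $V_\omega$ of the harmonic measure of the ball, so that the singular kernels cancel and the bounded harmonic difference $V-V_\omega$ can be integrated against $\nu_u$ and $\nu_M$ without assuming $u(x_0),M(x_0)\neq-\infty$; the same term $b\,\nu_M\bigl(\overline B(x_0,r_0)\bigr)$ then comes from $0\leq V-V_\omega\leq b$ on the ball. These are dual maneuvers built on the same three ingredients (Proposition \ref{pr:1}, Proposition \ref{pr:2}, and the maximum principle), and neither is more general; the paper's version is slightly cleaner in that \eqref{f:PJ} applies verbatim, whereas yours buys a globally bounded, compactly supported integrand $V-V_\omega$ at the cost of the $\infty-\infty$ issue you flag.

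Two points in your write-up need a sentence more care, though neither is a genuine gap. First, the bound $0\leq V-V_\omega\leq b$ on $B(x_0,r_0)$: the ``boundary values'' of $V$ are only a $\limsup$ (upper semicontinuity gives $\limsup_{x\to\zeta}V(x)\leq V(\zeta)\leq b$, and $V\geq 0$ with $V_\omega$ continuous and vanishing on $\partial B(x_0,r_0)$ gives $\liminf_{x\to\zeta}(V-V_\omega)(x)\geq 0$), so you should invoke the maximum principle for the subharmonic functions $\pm(V-V_\omega)$ in this $\limsup$/$\liminf$ form rather than speak of boundary values. Second, the subtracted identity when $u(x_0)=-\infty$: your proposed fix (replace $u$ near $x_0$ by its Poisson modification on a small ball $B(x_0,\rho)$ and let $\rho\to 0$) does work --- the right-hand side is unchanged for $\rho<r_0$ since $\supp\mu\cup\supp\omega\subset D\setminus B(x_0,r_0)$, and monotone convergence plus continuity of the harmonic function $V-V_\omega$ at $x_0$ handles the left-hand side, producing at worst an extra term $\bigl(V-V_\omega\bigr)(x_0)\,\nu_u(\{x_0\})\geq 0$ whose sign is favourable --- but note that this fix is itself exactly the balayage step of the paper, performed on a small ball, so in the end the two proofs share even this device.
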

\begin{proof} The technique of balayage out from the ball $B(x_0,r_0)$ gives two functions  $u_0\in \sbh_* (D) $ with the Riesz measure $\nu_{u_0}$ and  $ M_0\in \sbh_* (D) $ with the Riesz measure $\nu_{M_0}$ such that 
\begin{subequations}\label{se:uM}
\begin{align} 
	u_0\bigm|_{B(x_0,r_0)}\in \Har \bigl(B(x_0,r_0)\bigr)&, 
	\quad u_0=u \text{ on } D\setminus B(x_0,r_0),
\tag{\ref{se:uM}u}\label{se:uMu}
\\ 
	M_0\bigm|_{B(x_0,r_0)}\in \Har \bigl(B(x_0,r_0)\bigr)&, 
	\quad  M_0=M \text{ on } D\setminus B(x_0,r_0),
	\notag
	\\
	u_0\overset{\eqref{uMe}}{\leq} M_0+C_0 \quad \text{ on } D&\quad \text{where $C_0\in \RR^+$ is a constant,}
\tag{\ref{se:uM}b}\label{se:uMb}
\\
\supp \nu_{M_0}\subset D\setminus B(x_0,r_0),& \quad \nu_{M_0}\bigl(\partial B(x_0,r_0)\bigl)=
\nu_M \bigl(\overline B(x_0,r_0)\bigl)
\tag{\ref{se:uM}n}\label{se:uMN}
\\
\nu_{M_0}\bigm|_{D\setminus \overline B(x_0,r_0)}&= \quad \nu_{M}\bigm|_{D\setminus \overline B(x_0,r_0)}\,.
\tag{\ref{se:uM}r}\label{se:uMr}
\end{align}
\end{subequations}
By Proposition \ref{pr:1} the measure $\mu\overset{\eqref{eq:mu}}{:=}\mathcal P^{-1} (V)\overset{\eqref{con:P}}{\in} J_{x_0}(D)$ satisfies the following  conditions:
$\supp \mu \overset{\eqref{cVh}}{\subset} D\setminus B(x_0,r_0)$, $\mu(D)=1$.
The inequality \eqref{se:uMb}  entails the inequality
\begin{equation*}
	\int u_0 \,{\rm d} \mu\leq \int M_0 \,{\rm d} \mu+C_0
\end{equation*}
Hence by Proposition \ref{pr:2} 
\begin{equation*}
	\int_{D\setminus \overline B(x_0,r_0)} V\,{\rm d}\nu_{u} \overset{\eqref{se:uMu}}{\leq} \int_D V\,{\rm d}\nu_{u_0} \overset{\eqref{f:PJ}}{\leq} 
	\int_D V\,{\rm d}\nu_{M_0}+\bigl(C_0-u_0(x_0)+M_0(x_0)\bigr)
\end{equation*}
Put $C_1:=C_0-u_0(x_0)+M_0(x_0)\in \RR$.  We continue this inequality as
\begin{multline*}
	\int_{D\setminus \overline B(x_0,r_0)} V\,{\rm d}\nu_{u} 
	\overset{\eqref{se:uMN}}{\leq} \int_{D\setminus \overline B(x_0,r_0)} V\,{\rm d}\nu_{M_0}
	+\int_{ B(x_0,r_0)} V\,{\rm d}\nu_{M_0}+C_1\\
	\overset{\eqref{se:uMr}}{=} 
	\int_{D\setminus \overline B(x_0,r_0)} V\,{\rm d}\nu_{M} +\int_{\partial  B(x_0,r_0)} V\,{\rm d}\nu_{M_0}+C_1\\
	\overset{\eqref{cVh:b}}{\leq} 
	\int_{D\setminus \overline B(x_0,r_0)} V\,{\rm d}\nu_{M}+b\,\nu_{M_0}\bigl(\overline B(x_0,r_0)\bigr)+C_1
	\\
	\overset{\eqref{se:uMN}}{=} 
	\int_{D\setminus \overline B(x_0,r_0)} V\,{\rm d}\nu_{M}+b\,\nu_{M}\bigl(\overline B(x_0,r_0)\bigr)+C_1.
\end{multline*}
We choose $C:=b\,\nu_{M}\bigl(\overline B(x_0,r_0)\bigr)+C_1$ and obtain the inequality \eqref{inDuM}.~{$\blacktriangleleft$}
\end{proof}

\subsection{Continuation of test functions}

\begin{proposition}\label{pr:3}
Let $ v\in \sbh_0^+ (D\setminus K)$ be a  test function for $D$ outside of $K$, $r_0>0$, and $B(x_0,2r_0)\subset K$. Then there are 
subdomains $D_0\Subset D_1\Subset D$,  a number $r_0>0$ and a function $\widetilde{v}\in \sbh^+ \bigl(D\setminus \{x_0\}\bigr)$ 
such that 
\begin{subequations}\label{se:vV+}
\begin{align} 
\overline B(x_0,2r_0) &\subset K \subset D_0, 
\tag{\ref{se:vV+}a}\label{se:vV+a}
\\
\widetilde{v}\bigm|_{B_*(x_0,2r_0)}&\in \Har \bigl(B_*(x_0,2r_0)\bigr), 
\tag{\ref{se:vV+}b}\label{se:vV+b}\\
\exists \; \lim_{x_0\neq x\to x_0}&\dfrac{\widetilde{v}(x)}{\bigl|h_m(x-x_0)\bigr|} \in (0,+\infty),
\tag{\ref{se:vV+}c}\label{se:vV+c}\\
\widetilde{v}=v &\text{ on } D \setminus D_1.
\tag{\ref{se:vV+}d}\label{se:vV+d}
\end{align}
\end{subequations}
\end{proposition}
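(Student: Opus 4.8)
The plan is to obtain $\widetilde v$ by gluing, via Theorem~\ref{th:g}, the given test function $v$ with a large multiple of a Jensen potential that carries the pole at $x_0$ and vanishes outside a compact neighbourhood of $K$. First I would shrink $r_0$ so that $\overline B(x_0,2r_0)\subset\Int K$, fix a smooth connected subdomain $U$ with $K\subset U\Subset D$, and put $D_1:=U$ together with some $D_0$ satisfying $K\subset D_0\Subset U$; this already secures \eqref{se:vV+a}. Let $\mu$ be the harmonic measure of $x_0$ relative to $U$. Since $u(x_0)\le\int u\,{\rm d}\mu$ for every $u\in\sbh(D)$, we have $\mu\in J_{x_0}(D)$, and by Proposition~\ref{pr:1} the function $V:=V_\mu\overset{\eqref{df:Vmu}}{=}\int h_m(\cdot-y)\,{\rm d}\mu(y)-h_m(\cdot-x_0)$ is a Jensen potential, $V\in PJ_{x_0}(D)$.

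The properties of $V$ that I would verify are the following. The measure $\mu$ lives on $\partial U$, so $V$ is harmonic on $U\setminus\{x_0\}$ and on $D\setminus\overline U$. Moreover $V\equiv0$ on $D\setminus\overline U$: for $x\notin\overline U$ the map $y\mapsto h_m(x-y)$ is harmonic on $U$, hence its $\mu$-average equals its value at $x_0$ and the two terms of $V$ cancel. By the minimum principle $V>0$ on $U\setminus\{x_0\}$, so $m_0:=\min_{\partial K}V>0$. Finally, near $x_0$ the integral term stays bounded while $-h_m(\cdot-x_0)=\bigl|h_m(\cdot-x_0)\bigr|\to+\infty$, so $V(x)\big/\bigl|h_m(x-x_0)\bigr|\to1$. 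Writing $A:=\sup_{D\setminus K}v<+\infty$ (finite by \eqref{v0l}), I now fix $\lambda>0$ with $\lambda m_0\ge A$ and set $v_0:=\lambda V\in\sbh^+\bigl(D\setminus\{x_0\}\bigr)$; then $\lambda V\ge A$ on $\partial K$ and $\lim_{x\to x_0}\lambda V(x)\big/\bigl|h_m(x-x_0)\bigr|=\lambda\in(0,+\infty)$.

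Next I would apply Theorem~\ref{th:g} with $\mathcal O_0:=D\setminus\{x_0\}$, $\mathcal O:=D\setminus K\subset\mathcal O_0$, the function $v$ on $\mathcal O$ and $v_0=\lambda V$ on $\mathcal O_0$. Because $x_0\in\Int K$, the set $\mathcal O_0\cap\partial\mathcal O$ is $\partial K$, and condition \eqref{0vs} there reads $\limsup_{D\setminus K\ni x'\to\xi}v(x')\le\lambda V(\xi)$, which holds since the left side is $\le A\le\lambda m_0\le\lambda V(\xi)$. Hence
\[
\widetilde v:=\begin{cases}\max\{v,\lambda V\}&\text{on }D\setminus K,\\[1mm]\lambda V&\text{on }K\setminus\{x_0\},\end{cases}
\]
belongs to $\sbh\bigl(D\setminus\{x_0\}\bigr)$, and $\widetilde v\ge0$. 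On $K\setminus\{x_0\}$, in particular on $B_*(x_0,2r_0)\subset U\setminus\{x_0\}$, one has $\widetilde v=\lambda V$, which is harmonic, giving \eqref{se:vV+b}, and satisfies $\lim\widetilde v\big/\bigl|h_m(\cdot-x_0)\bigr|=\lambda\in(0,+\infty)$, giving \eqref{se:vV+c}. Since $\lambda V\equiv0$ off $\overline U$ and $\lambda V=0$ on $\partial U$ while $v\ge0$, we get $\widetilde v=v$ on $D\setminus U=D\setminus D_1$, which is \eqref{se:vV+d}.

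The main obstacle is the construction and control of $V$: one must know that sweeping $\delta_{x_0}$ onto $\partial U$ yields a potential that is \emph{identically} zero outside $\overline U$ (not merely decaying) and still carries the clean normalized singularity $V\sim\bigl|h_m(\cdot-x_0)\bigr|$ at $x_0$. This is exactly where Proposition~\ref{pr:1} together with the harmonic-measure identity $\int h_m(x-\cdot)\,{\rm d}\mu=h_m(x-x_0)$ for $x\notin\overline U$ is used; once it is in place, the large factor $\lambda$ serves only to validate the single boundary inequality of Theorem~\ref{th:g} on $\partial K$, and the remaining items \eqref{se:vV+a}--\eqref{se:vV+d} are a direct check. (For $m=1$ one argues analogously in the convex category.)
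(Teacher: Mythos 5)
Your proof is correct and takes essentially the same route as the paper's: there $v_0:=\frac{q}{a}\,g_{D_1}(\cdot,x_0)$ is a suitably scaled Green's function of a Dirichlet-regular subdomain $D_1\Supset K$, extended by zero outside $D_1$ and glued to $v$ by Theorem~\ref{th:g} (the paper glues along $\partial D_0$ for an intermediate $K\subset D_0\Subset D_1$ rather than along $\partial K$). Your potential $V_\mu$ of the harmonic measure of $U=D_1$ at $x_0$ is exactly that extended Green's function, and your factor $\lambda$ with $\lambda m_0\geq\sup_{D\setminus K}v$ plays the role of $q/a$, so the two arguments differ only in presentation.
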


\begin{proof} Obviously, there is  a  subdomain $D_0\Subset D$ satisfying  \eqref{se:vV+a}. There is  a subdomain $D_1$ that regular for Dirichlet problem and  
$D_0\Subset D_1\Subset D$. Let $g_{D_1}(\cdot , x_0)$ be the Green's function of $D_1$ with the pole $x_0$ where $g_{D_1}(x,x_0)\equiv 0$ for all $x\in D\setminus D_1$. Then $g_{D_1}(\cdot, x_0)\in \sbh \bigl(D\setminus \{x_0\}\bigr)$. 
Put
\begin{equation*}
	q:=\sup_{x\in \partial D_0} v(x), \quad a:=\inf_{x\in \partial D_0}g_{D_1}(x,x_0) >0, \quad v_0:=\frac{q}{a} \, g_{D_1}(\cdot ,x_0), 
\end{equation*}
and $\mathcal O:=D\setminus \clos D_0$, $\mathcal O_0:=D\setminus \{x_0\}$. Then the condition \eqref{0vs} is fulfilled. By Theorem  
\ref{th:g} the function \eqref{consv} is required
according to the known properties of the Green's function.~{$\blacktriangleleft$}
\end{proof}

\subsection{Proof of Theorem \ref{th:s}}
\begin{proof} Let $B(x_0,2r_0)\subset K$ where $r_0>0$. Suppose that  the properties \eqref{s:a}, \eqref{s:c}  are fulfilled for $u\neq \boldsymbol{-\infty}$. 
We must prove that the integral from \eqref{s:b} with  $\nu:=\nu_u$  is finite. 
Consider the function $\widetilde{v}$ from Proposition \ref{pr:3} where
\begin{equation*}
	0<c\overset{\eqref{se:vV+c}}{:=}\lim_{x_0\neq x\to x_0}\dfrac{\widetilde{v}(x)}{\bigl|h_m(x-x_0)\bigr|} <+\infty.
\end{equation*}
Then the function  	$V:=\frac{1}{c}\, \widetilde{v}$  satisfies the following conditions 
\begin{subequations}\label{se:vV+V}
\begin{align} 
V\bigm|_{B_*(x_0,2r_0)}&\overset{\eqref{se:vV+b}}{\in} \Har \bigl(B_*(x_0,2r_0)\bigr), 
\tag{\ref{se:vV+V}b}\label{se:vV+Vb}\\
\lim_{x_0\neq x\to x_0}\dfrac{V(x)}{\bigl|h_m(x-x_0)\bigr|} &\overset{\eqref{se:vV+c}}{=}1,
\tag{\ref{se:vV+V}c}\label{se:vV+Vc}\\
V\overset{\eqref{se:vV+d}}{=} \frac{1}{c}\,v &\text{ on } D \setminus D_1.
\tag{\ref{se:vV+V}d}\label{se:vV+Vd}
\\
\lim_{D\ni x'\to x} V(x')&\overset{\eqref{v0l}}{=}0 \quad \text{for each  $x\in \partial D$},
\tag{\ref{se:vV+V}e}\label{se:vV+Ve}
\\
\sup_{x\in \partial B(x_0,r_0)} V(x)&=:b\overset{\eqref{v0l}}{<}+\infty.
\tag{\ref{se:vV+V}f}\label{se:vV+Vf}
\end{align}
\end{subequations}
We put $V(x)\equiv 0$ at $x\in \RR^m_{\infty}\setminus D$. Then 
	$V\overset{\eqref{se:vV+Ve}}{\in} \sbh^+\bigl(\RR^m_{\infty}\setminus \{x_0\}\bigr)$. Consider the sequence of functions 
$V_n:=\max \{0, V-1/n\}, \quad n\in \NN$.
	If $n_0\in \NN$ is a sufficiently large number, then every function $V_n$, $n\geq n_0$, is a Jensen function inside of $D$ with the pole at $x_0\in D$ such that 
	\begin{subequations}\label{se:vV+V1}
\begin{align} 
V_n\bigm|_{B_*(x_0,r_0)}\overset{\eqref{se:vV+Vb}}{\in} \Har \bigl(B_*(x_0,r_0)\bigr), 
\quad \lim_{x_0\neq x\to x_0}\dfrac{V_n(x)}{\bigl|h_m(x-x_0)\bigr|} &\overset{\eqref{se:vV+Vc}}{=}1,
\quad \sup_{x\in \partial B(x_0,r_0)} V_n(x)\overset{\eqref{se:vV+Vf}}{\leq} b,
\notag
\\
\frac{1}{c}\,v(x)=V(x)\geq V_n(x)\,{\nearrow} \, \frac{1}{c}\,v(x), \quad 
\text{ for all } x\overset{\eqref{se:vV+Vd}}{\in} & D \setminus D_1 \text{ when } n\to \infty.
\tag{\ref{se:vV+V1}}\label{se:vV+V1d}
\end{align}
\end{subequations}
Hence by Corollary  \ref{cr:J} there is a constant $C\in \RR^+$ such that 
\begin{equation*}
	\int_{D\setminus \overline B(x_0,r_0)} V_n\,{\rm d}\nu_u\overset{\eqref{inDuM}}{\leq} \int_{D\setminus \overline B(x_0,r_0)} V_n \,{\rm d}\nu_M+C \quad\text{for all $n\geq n_0$.}
\end{equation*}
Therefore, for all $n\geq n_0$, 
\begin{multline*}
	\int_{D\setminus \overline B(x_0,r_0)} V_n\,{\rm d}\nu_u\overset{\eqref{inDuM}}{\leq} \int_{D\setminus \overline B(x_0,r_0)} V_n \,{\rm d}\nu_M+C 
	\overset{\eqref{se:vV+V1d}}{\leq}\int_{D\setminus \overline B(x_0,r_0)} V \,{\rm d}\nu_M+C \\
\leq \int_{D\setminus D_1} V \,{\rm d}\nu_M+\int_{D_1\setminus \overline B(x_0,r_0)} V \,{\rm d}\nu_M+C 
\overset{\eqref{se:vV+Vf}}{\leq}  \int_{D\setminus D_1} V \,{\rm d}\nu_M+
b \,\nu_M\bigl(D_1\setminus \overline B(x_0,r_0)\bigr) +C 
\end{multline*}
where we use the maximum principle for $V\in \sbh\bigl(\RR_{\infty}^m\setminus \{x_0\}\bigr)$ in $\RR_{\infty}^m\setminus B(x_0,r_0)$.
Further, we put $C_1:=b \,\nu_M\bigl(D_1\setminus \overline B(x_0,r_0)\bigr) +C \in \RR$ and continue as
\begin{equation*}
\int_{D\setminus \overline B(x_0,r_0)} V_n\,{\rm d}\nu_u\leq \int_{D\setminus D_1} V \,{\rm d}\nu_M+C_1
\overset{\eqref{se:vV+V1d}}{=}\frac{1}{c}\,\int_{D\setminus D_1} v  \,{\rm d}\nu_M+C_1
\leq \frac{1}{c}\,\int_{D\setminus K} v  \,{\rm d}\nu_M+C_1.
\end{equation*}
So, when $n\to \infty$, we obtain in view of \eqref{se:vV+V1d}
\begin{equation*}
\frac{1}{c}\int_{D\setminus D_1} v\,{\rm d}\nu_u \overset{\eqref{se:vV+V1d}}{=}\int_{D\setminus D_1} V\,{\rm d}\nu_u \leq C_2:=\frac{1}{c}\,\int_{D\setminus K} v  \,{\rm d}\nu_M+C_1\overset{\eqref{co:M}}{\in} \RR.
\end{equation*}
Hence 
\begin{equation*}
\int_{D\setminus K} v\,{\rm d}\nu_u \leq  \int_{D_1\setminus K} v\,{\rm d}\nu_u+c\,C_2
\leq \nu_u(D_1\setminus K) \sup_{x\in D\setminus K}v(x)+c\,C_2\overset{\eqref{v0l}}{<}+\infty.
\end{equation*}
This completes ерпу proof of Theorem \ref{th:s}.~{$\blacktriangleleft$}
\end{proof}
\begin{remark}\label{r:r3} 
{\rm Our results show that the construction of test functions in the sense of Definition\/ {\rm \ref{df:mJ}}  is important. For $m=2=2n$ such constructions developed in\/ {\rm  \cite{KhT15}.} We consider the case $m\neq 2$ in another place.}
\end{remark}

\begin{footnotesize}
\begin{flushleft}
Bulat N. Khabibullin\\
\textit{Bashkir State University, Ufa, Russian Federation}\\
\textit{E-mail:} khabib-bulat@mail.ru\\
\end{flushleft}
\end{footnotesize}

\begin{footnotesize}
\begin{flushleft}
Nargiza R. Tamindarova\\
\textit{Bashkir State University, Ufa, Russian Federation}\\
\textit{E-mail:} nargiza89@gmail.com\\
\end{flushleft}
\end{footnotesize}

Received ........
 
Accepted ........

\end{document}